\theoremstyle{plain}
\newtheorem{thm}{\protect\theoremname}
  \theoremstyle{plain}
  \newtheorem{lem}[thm]{\protect\lemmaname}
  \providecommand{\lemmaname}{Lemma}
\providecommand{\theoremname}{Theorem}
\begin{document}
\global\long\def\GL{\mathrm{GL}}
\global\long\def\CC{\mathbb{C}}
\global\long\def\cF{\mathcal{F}}
\global\long\def\QQ{\mathbb{Q}}
\global\long\def\ZZ{\mathbb{Z}}
 \global\long\def\cA{\mathcal{A}}
\global\long\def\cL{\mathcal{L}}
\global\long\def\cZ{\mathcal{Z}}
\global\long\def\cO{\mathcal{O}}
\global\long\def\End{\mathrm{End}}

\title{A New Bound for the Uniform Admissibility Theorem}

\author{Alexander Kemarsky}

\maketitle
Let $G$ be a reductive group defined over a $p$-adic field $F$,
that is $G=F$-points of a reductive group defined over $F$. A representation
$(\pi,V)$ of $G$ is called smooth if for every $v\in V$ there exists
an open compact subgroup $K<G$ such that $\pi(g)v=v$ for every $g\in K$.
Let us denote by $V^{K}$ the subspace of all $K$-fixed vectors in
$V$. A representation is called admissible if $V^{K}$ is finite-dimensional
for every open compact subgroup $K<G$. In this work we consider only
smooth admissible representations of $G$. Also, algebras are associative
and with unit.\\
Let $K$ be a fixed open compact subgroup of $G$. In \cite{Bernstein},
Bernstein proved the following 
\begin{thm}[Uniform Admissibility Theorem]
\label{thm: uniform adm} There exists a uniform constant $N(G,K)$
such that for every irreducible representation $(\pi,V)$ of $G$
there is an inequality $\dim V^{K}\le N(G,K)$.
\end{thm}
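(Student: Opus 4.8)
The plan is to reformulate the theorem as a statement about simple modules over the level-$K$ Hecke algebra and then extract the bound from a module-finiteness property of that algebra over a commutative central subalgebra. Write $\mathcal{H}(G)$ for the convolution algebra of locally constant, compactly supported functions on $G$, let $e_{K}\in\mathcal{H}(G)$ be the idempotent attached to $K$, and put $\mathcal{H}_{K}:=e_{K}\mathcal{H}(G)e_{K}$ (convolution product). For a smooth representation $(\pi,V)$ the space $V^{K}=e_{K}V$ is a module over $\mathcal{H}_{K}$, and when $\pi$ is irreducible with $V^{K}\ne 0$ this module is simple: if $W\subseteq V^{K}$ is a nonzero $\mathcal{H}_{K}$-submodule then $\pi(\mathcal{H}(G))W=V$, whence $V^{K}=e_{K}V=\mathcal{H}_{K}W=W$ (using $e_{K}W=W$). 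So it is enough to bound $\dim_{\CC}W$ over all simple $\mathcal{H}_{K}$-modules $W$, and the bound obtained will serve as $N(G,K)$.

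Next I would record a Schur-type lemma. Since $F$ is a $p$-adic field, $G$ is second countable, so $\mathcal{H}_{K}$ has at most countable $\CC$-dimension; a simple module is cyclic, hence has countable $\CC$-dimension as well, and since $\CC$ is algebraically closed and uncountable the standard Dixmier argument gives $\End_{\mathcal{H}_{K}}(W)=\CC$. In particular the centre $Z(\mathcal{H}_{K})$ acts on $W$ through a character $\chi$. The structural input I would then establish or invoke is: there is a commutative subalgebra $\mathfrak{Z}\subseteq Z(\mathcal{H}_{K})$ over which $\mathcal{H}_{K}$ is a finitely generated module, say $\mathcal{H}_{K}=\sum_{i=1}^{d}\mathfrak{Z}a_{i}$ with $d=d(G,K)$. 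Granting this, fix $0\ne w\in W$; then $W=\mathcal{H}_{K}w=\sum_{i=1}^{d}\mathfrak{Z}(a_{i}w)=\sum_{i=1}^{d}\CC(a_{i}w)$, the last step because $\mathfrak{Z}$ acts by the scalars prescribed by $\chi$. Hence $\dim_{\CC}W\le d$, and one may take $N(G,K)=d$.

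The crux — and the source of a constant better than the one implicit in \cite{Bernstein} — is this structural input: exhibiting $\mathfrak{Z}$ and, above all, controlling the number $d$ of module generators. I would proceed in two stages. \emph{(a) Only finitely many Bernstein blocks meet level $K$:} using that $G$ has finitely many conjugacy classes of parabolic subgroups, and that for each Levi $M$ only finitely many inertial classes of supercuspidal representations of $M$ admit a nonzero $(M\cap K)$-fixed vector, one reduces to one block at a time, writing $\mathcal{H}_{K}$ as a finite product of block subalgebras $\mathcal{H}_{K}(\mathfrak{s})$. \emph{(b) Module-finiteness within a block:} for a fixed block $\mathfrak{s}=[M,\sigma]$, $\mathcal{H}_{K}(\mathfrak{s})$ is module-finite over the coordinate ring of the corresponding component of the Bernstein variety (a quotient of a torus), which follows from the theory of the Bernstein centre together with the exactness and the first and second adjointness of Jacquet restriction and parabolic induction; equivalently, via Bushnell--Kutzko types, $\mathcal{H}_{K}(\mathfrak{s})$ is Morita equivalent to an affine-Hecke-type algebra, which is module-finite over its centre. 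Combining (a) and (b) yields $\mathfrak{Z}$ and a first value of $d$.

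To sharpen the bound I would instead estimate $\dim\pi^{K}$ directly by descent along parabolic induction: embed an irreducible $\pi$ into $i_{P}^{G}(\sigma)$ with $\sigma$ supercuspidal on a Levi $M$, and use the Iwasawa decomposition $G=\bigsqcup_{w}PwK$ into finitely many double cosets to get $\dim\pi^{K}\le\sum_{w}\dim\sigma^{K_{w}}$ for suitable compact open subgroups $K_{w}\subseteq M$; this reduces everything to supercuspidal representations, for which the relevant block Hecke algebra is a module of small, explicit rank over a Laurent-polynomial ring, so the generator count can be made sharp. The step I expect to demand the most care is exactly this bookkeeping — bounding the number of relevant double cosets, the number of inertial classes meeting $K$, and the module rank within each block — since these three quantities together determine the final value of $N(G,K)$.
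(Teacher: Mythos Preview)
Your outline is sound in principle, but it proceeds by a genuinely different route from the paper. The paper does not invoke the Bernstein centre, block decomposition, second adjointness, or types. Instead it rests on an elementary linear-algebra estimate (Lemma~\ref{lem: commutative matrices}): if $A_1,\dots,A_l\in M_n(\CC)$ commute, the unital algebra they generate has dimension at most $(l+1)\,n^{2-2/(l+1)}$. This is combined with a Cartan-type structural fact about $H(G,K)$: there are finitely many elements $X_i,Y_j$ and a commutative subalgebra $\cA$, generated over the centre $\cZ$ by $l$ elements (with $l$ the semisimple rank), such that every $X\in H(G,K)$ can be written as $\sum_{i,j} X_iP_{ij}Y_j$ with $P_{ij}\in\cA$ (Lemma~\ref{lem: assoc algebra} and the lemma following it). For an $n$-dimensional simple module, Schur kills $\cZ$, Burnside gives $\dim\rho(\cL)=n^2$, and the commuting-matrix bound on $\dim\rho(\cA)$ yields $n^2\le pq(l+1)n^{2-2/(l+1)}$, i.e.\ $n\le (pq)^{(l+1)/2}(l+1)^{(l+1)/2}$. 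So where you pass through module-finiteness of $\mathcal{H}_K$ over its centre, the paper instead controls only the size of the image of a finitely generated \emph{commutative} (not central) subalgebra inside $\End(V)$, and this is what the new Lemma~\ref{lem: commutative matrices} sharpens. What your approach buys is conceptual clarity and a one-line endgame once the heavy structural input is granted; what the paper's approach buys is that it is completely elementary after the Cartan decomposition, avoids machinery (some of which, e.g.\ second adjointness, is chronologically posterior and in some treatments logically intertwined with uniform admissibility itself), and---this being the paper's actual purpose---produces the explicit polynomial bound $[K_0:K]^{l+1}(l+1)^{(l+1)/2}$ in place of Bernstein's original exponential one, whereas your module-rank $d$ remains inexplicit.
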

In this work we reprove one of the two lemmas (Lemma \ref{lem: commutative matrices})
in Bernstein's original proof. As a result we obtain a sharper estimate
for the bound $N(G,K)$ in the theorem. For the convenience of the
reader we sketch here (almost without proofs) the main steps in Bernstein's
proof. We will demonstrate the new bound on $\dim V^{K}$ only in
the case $G=\GL_{n}(F)$. The case of a general reductive group is
similar. \\
Let us reformulate the theorem in terms of Hecke algebras. Denote
by $H(G,K)$ the Hecke algebra consisting of all compactly supported
functions $f:G\to\CC$ such that $f(k_{1}gk_{2})=f(g)$ for all $g\in G$
and $k\in K$. Denote by $H(G)$ the Hecke algebra consisting of all
locally constant and compactly supported functions $f:G\to\CC.$ Let
$(\pi,V)$ be an irreducible representation of $G$ such that $V^{K}\ne0$.
The Hecke algebra $H(G)$ acts on $(\pi,V).$ Note that the space
$V^{K}$ is an irreducible representation of $H(G,K)$. Indeed, let
$0\ne v\in V^{K}$, $0\ne w\in V^{K}.$ The representation $(\pi,V)$
is irreducible, hence there exists a function $f\in H(G)$ such that
$\pi(f)v=w$. Obviously $\pi(1_{K}*f*1_{K})v=w$ and the convolution
$1_{K}*f*1_{K}\in H(G,K)$. Thus we can reformulate Theorem \ref{thm: uniform adm}
as the following 
\begin{thm}
There exists a uniform constant $N(G,K)$ such that for every irreducible
finite-dimensional $H(G,K)$-module $W$ there is an inequality $\dim W\le N(G,K)$.
\end{thm}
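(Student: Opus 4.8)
\medskip

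\noindent The plan is to realise $A=H(G,K)$ as a module-finite algebra over a large commutative subalgebra built from the torus, and then to bound $\dim W$ for an irreducible module $W$ by a purely algebraic argument on the operator algebra $\rho(A)$ --- this last step being Lemma~\ref{lem: commutative matrices}, which is where the sharpening takes place. Since conjugating $K$ by $g\in G$ carries $(\pi,V)$ to an isomorphic representation (with $V^{gKg^{-1}}=\pi(g)V^{K}$) and shrinking $K$ only enlarges every fixed space $V^{K}$, I may --- either directly for the given $K$, or, accepting a larger constant, after conjugating $K$ into $\GL_{n}(\cO)$ and shrinking it --- assume that $G=\GL_{n}(F)$ and that $K$ admits an Iwahori factorisation $K=(K\cap\bar{U})(K\cap T)(K\cap U)$ with respect to the Borel $B=TU$ and, more generally, to every standard parabolic subgroup.

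\medskip

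\noindent First I would establish the structure of $A$ (this is Bernstein's other lemma, which I would cite rather than reprove). Using the Iwahori factorisation, for $s,t$ in the contracting submonoid $T^{+}=\{t\in T:\ t(K\cap U)t^{-1}\subseteq K\cap U,\ t^{-1}(K\cap\bar{U})t\subseteq K\cap\bar{U}\}$ one has $1_{KsK}\ast1_{KtK}=\mathrm{vol}(K)\cdot1_{KstK}$, so the normalised functions $\mathrm{vol}(K)^{-1}1_{KtK}$, $t\in T^{+}$, span a commutative subalgebra; inverting the image of a strictly contracting element such as $\mathrm{diag}(\varpi,\varpi^{2},\dots,\varpi^{n})$ produces a commutative subalgebra $\cZ\subseteq A$ isomorphic to the group algebra $\CC[T/(T\cap K)]$, a Laurent polynomial ring in $n$ variables tensored with a finite group algebra --- in particular a finitely generated commutative $\CC$-algebra. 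The Iwahori--Bruhat decomposition of $G$ then displays $A$ as a finitely generated $\cZ$-module, with an explicit finite generating set indexed by $N_{G}(T)/(T\cap K)$ together with $\GL_{n}(\cO)/K$. Since $\cZ$ is integral over the invariant subring $\cZ^{S_{n}}$ and (by the same structure theory) $\cZ^{S_{n}}\subseteq Z(A)$, it follows that $A$ is a finitely generated module over its centre $Z(A)$, generated by at most $N_{0}=N_{0}(G,K)$ elements for an explicit $N_{0}$, and $Z(A)$ is itself a finitely generated $\CC$-algebra.

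\medskip

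\noindent Then I would carry out the algebraic step, Lemma~\ref{lem: commutative matrices}. Let $\rho\colon A\to\End(W)$ be the representation. As $W$ is irreducible and $\CC$ algebraically closed, Burnside's theorem gives $\rho(A)=\End(W)$, while by Schur's lemma $Z(A)$ acts through a character $\chi$; since $Z(A)$ is a finitely generated $\CC$-algebra, $Z(A)/\ker\chi\cong\CC$ by the Nullstellensatz, so $\rho$ factors through $\bar{A}:=A/(\ker\chi)A$ with $\dim_{\CC}\bar{A}\le N_{0}$. Hence
\[
(\dim W)^{2}=\dim_{\CC}\End(W)=\dim_{\CC}\rho(A)\le\dim_{\CC}\bar{A}\le N_{0},
\]
so $\dim W\le\sqrt{N_{0}}$, which already proves the theorem. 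The entire gain over \cite{Bernstein} is in replacing this crude $N_{0}$ by a sharp bound: restricting $\rho$ to $\cZ$ and decomposing $W=\bigoplus_{\psi}W_{\psi}$ into generalised weight spaces, the explicit generators of $A$ over $\cZ$ permute the weights via the $S_{n}$-action on characters of $\cZ$, so irreducibility of $W$ forces the occurring weights into a single $S_{n}$-orbit (at most $n!$ of them), and one is reduced to bounding $\dim W_{\psi}$ for a single weight by a careful analysis of the finitely many relations of $A$ over $\cZ$ acting on $W_{\psi}$ --- an affine-Hecke-algebra-style computation. The hard part will be exactly this last bookkeeping, i.e.\ controlling $\dim W_{\psi}$ (equivalently $\dim_{\CC}\bar{A}$, equivalently the rank of $A$ over $Z(A)$) as tightly as the combinatorics of $K$ permit; the reduction, the commutativity of $\cZ$, module-finiteness, Burnside's theorem, and the single-orbit argument are all soft.
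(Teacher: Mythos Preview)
Your crude argument --- $H(G,K)$ is module-finite over its centre, then Schur plus Burnside give $(\dim W)^{2}\le N_{0}$ --- is correct and already proves the theorem. But both the structural input you use and the sharpening you propose differ from the paper's. The paper does \emph{not} pass through $Z(H(G,K))$, does not need the fact that the Weyl-invariants of the torus subalgebra lie in the centre, and performs no weight-space decomposition of $W$ and no affine-Hecke-algebra combinatorics. Instead it uses an intermediate commutative but non-central subalgebra $\cA$, generated over a tiny and obviously central $\cZ$ (coming from $Z(G)$, not from the full torus) by only $l=n-1$ elements $A_{1},\dots,A_{l}$, together with the sandwich decomposition $X=\sum_{i,j}X_{i}P_{ij}Y_{j}$, $P_{ij}\in\cA$, of Lemma~\ref{lem: assoc algebra}. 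This yields $n^{2}=\dim\rho(\cL)\le pq\cdot\dim\rho(\cA)$, and the entire improvement over \cite{Bernstein} lies in bounding $\dim\rho(\cA)$ for an algebra generated by $l$ commuting matrices --- precisely Lemma~\ref{lem: commutative matrices}, which you name in your opening sentence but then never actually apply. The new inequality $\dim\rho(\cA)\le(l+1)n^{2-2/(l+1)}$ gives $\dim W\le(pq(l+1))^{(l+1)/2}$. Your proposed alternative (a single $S_{n}$-orbit of torus weights, then bound each $\dim W_{\psi}$) is a separate program, left here as the acknowledged ``hard part''; the paper's sharpening, by contrast, is pure linear algebra on commuting nilpotent matrices via Lemma~\ref{lem: jordan}, with no representation-theoretic bookkeeping on $W$ at all.
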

The theorem follows from the following lemmas.
\begin{lem}
\cite[Proposition 2]{Bernstein}\label{lem: commutative matrices}
Let $A_{1},A_{2},...,A_{l}$ be a commuting family of matrices in
$M_{n\times n}(\CC)$. Let $\cF$ be the algebra of matrices in $M_{n\times n}(\CC)$
generated by $A_{1},A_{2},...,A_{l}$ and the identity. Then
\[
\dim\cF\le(l+1)n^{2-\frac{2}{l+1}}.
\]

\end{lem}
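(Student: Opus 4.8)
The plan is to bound $\dim\cF$ by studying a single generic element of the commuting family and using the fact that a commutative subalgebra of $M_{n\times n}(\CC)$ decomposes according to the generalized eigenspace decomposition. First I would pass to a common "generalized eigenspace" decomposition: since $A_1,\dots,A_l$ commute, $\CC^n$ decomposes as a direct sum $\bigoplus_{j} V_j$ of subspaces on which each $A_i$ acts with a single eigenvalue plus a nilpotent part. On the block $V_j$ of dimension $n_j$, the subalgebra $\cF$ restricts to a subalgebra of $M_{n_j\times n_j}(\CC)$ generated by $l$ commuting matrices that are each (scalar) $+$ (nilpotent), so after subtracting scalars we are generated by $l$ commuting nilpotent matrices together with the identity. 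Since $\dim\cF\le\sum_j \dim\cF_j$ and $\sum_j n_j=n$, and since $x\mapsto x^{2-2/(l+1)}$ is concave for $l\ge 1$ so that $\sum_j n_j^{2-2/(l+1)}\le n^{2-2/(l+1)}$ only in one direction — actually superadditivity fails, so instead I would keep track that the \emph{exponent} bound is what must be proven for the whole space at once, and handle the reduction more carefully or simply prove the inequality directly without reducing to blocks.

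The cleaner route, which I would actually carry out, is the following counting argument. Let $m$ be a parameter to be optimized. Consider the subspace $\cF_{\le m}\subseteq\cF$ spanned by all monomials $A_1^{e_1}\cdots A_l^{e_l}$ with $e_1+\cdots+e_l\le m$ (including the identity, $m=0$ term). The number of such monomials is $\binom{m+l}{l}\le (m+1)^l$, hence $\dim\cF_{\le m}\le (m+1)^l$. On the other hand, because each $A_i$ satisfies its characteristic polynomial of degree $n$, one shows that $\cF_{\le m}$ stabilizes for $m$ large; more to the point, I claim that once $m\ge$ (something like $l(n-1)$) we have $\cF_{\le m}=\cF$, but that gives only a weak bound. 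The key improvement — and this is the heart of Bernstein's Proposition 2 — is a \emph{dimension-counting on flags}: if $d=\dim\cF$, pick monomials $M_1=I,M_2,\dots,M_d$ forming a basis of $\cF$; choosing them of minimal total degree, one gets that the multiset of degrees $\deg M_1\le\cdots\le\deg M_d$ satisfies a Macaulay-type growth estimate, forcing $\deg M_d$ to be small relative to $d$. Combining "$d\le(m+1)^l$ where $m=\deg M_d$" with a lower bound relating $m$ and $d$ coming from the action of $\cF$ on a cyclic-type vector (using that $\cF$ embeds into $\End(\CC^n)$ so $d\le n^2$, and that a well-chosen vector $v$ has $\cF v$ of dimension $\ge$ the number of distinct degrees appearing) yields the stated inequality after optimizing over the split between the "width" $(m+1)^l$ and the "ambient" bound $n^2$.

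Concretely, the optimization I anticipate: one bounds $d\le (m+1)^l$ on one hand, and on the other hand uses a volume/pigeonhole argument inside $\End(\CC^n)$ (or inside a suitable tower of subspaces $\cF\cdot v\subseteq\cF\cdot v'\subseteq\cdots$) to get something like $d\cdot m \gtrsim$ the number of basis monomials times their average degree, bounded below in terms of $n^2$; balancing $(m+1)^l \approx n^2/(m+1)$, i.e. $(m+1)^{l+1}\approx n^2$, gives $m+1\approx n^{2/(l+1)}$ and then $d\le (m+1)^l \le n^{2l/(l+1)} = n^{2-2/(l+1)}$, with the extra factor $(l+1)$ absorbing the rounding in $m$ and the $+1$'s. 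The main obstacle I expect is making the lower-bound half rigorous: one needs to show that a basis of $\cF$ cannot be forced into low degrees without $\cF$ already filling up a large chunk of $\End(\CC^n)$ — i.e. that the monomials of degree $\le m$ that are \emph{linearly independent in $\cF$} genuinely consume $\ge$ roughly $(m+1)/(l+1)$ "dimensions worth" per degree level against the $n^2$ budget. This is where commutativity is used essentially (it keeps the monomial count polynomial rather than exponential in degree) and where the precise constant $(l+1)$ comes from; the rest is bookkeeping with binomial coefficients and the concavity of $t\mapsto t^{l/(l+1)}$.
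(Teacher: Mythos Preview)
Your proposal has the right \emph{shape}---split the monomials by a degree parameter, bound the low-degree part by a simple count, bound the high-degree part against the ambient $n^2$, and balance---but the crucial ingredient is missing, and the parts you do sketch contain a sign error and a non-argument.

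First, the reduction to nilpotent generators \emph{does} work, contrary to what you feared. The exponent $2-\tfrac{2}{l+1}=\tfrac{2l}{l+1}$ is $\ge 1$ for $l\ge 1$, so $x\mapsto x^{2l/(l+1)}$ is convex with value $0$ at $0$, hence superadditive: $\sum_j n_j^{2l/(l+1)}\le \bigl(\sum_j n_j\bigr)^{2l/(l+1)}=n^{2l/(l+1)}$. Proving the bound on each generalized-eigenspace block and summing is therefore legitimate. You should not have discarded this step; the paper uses it, and nilpotency is essential for what follows.

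Second, and more seriously, your ``lower-bound half'' is not an argument. The Macaulay-growth / cyclic-vector / pigeonhole sketch does not produce anything like $d\cdot m\gtrsim n^2$; there is no mechanism by which a commutative algebra generated in low degrees is forced to fill a large portion of $M_{n\times n}$. What the paper actually does is the opposite direction: it bounds the \emph{high}-exponent monomials from \emph{above}, via a concrete linear-algebra lemma. Namely, for a single nilpotent $A\in M_{n\times n}(\CC)$ and any $m\ge 1$,
\[
\dim\,\mathrm{Span}\{A^{m}B : AB=BA\}\ \le\ \frac{n^{2}}{m},
\]
proved by writing $A$ in Jordan form and counting surviving block-entries after multiplying by $A^m$. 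With this in hand, split monomials $A_1^{j_1}\cdots A_l^{j_l}$ by the \emph{maximum} exponent (not total degree): those with all $j_i<x$ number at most $x^l$; those with some $j_i\ge x$ lie in $A_i^{x}\cdot\{B:[A_i,B]=0\}$ and hence span at most $n^2/x$ dimensions, contributing $\le l\,n^2/x$ in total. Thus $\dim\cF\le x^l+l n^2/x$, minimized at $x=n^{2/(l+1)}$ to give $(l+1)n^{2-2/(l+1)}$. Your optimization $(m+1)^{l+1}\approx n^2$ recovers exactly this balance, but only once the Jordan-block lemma supplies the $n^2/x$ term that your pigeonhole heuristic could not.
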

We will prove this lemma later.

\begin{lem}
\cite[Proposition 1]{Bernstein}\label{lem: assoc algebra} Let $\cL$
be an algebra over $\CC$, $\cA,\,\cZ$ commutative subalgebras in
$\cL$, $A_{1},\, A_{2},\,...,\, A_{l}\in\cA$, $X_{1},...,\, X_{p},\, Y_{1},\,...,\, Y_{q}\in\cL$.
Let us assume that $\cZ$ lies in the center of the algebra $\cL$,
$\cZ\subset\cA$,~ $\cA$ is the commutative algebra generated by
$A_{1},\,...,A_{l}$ and $\cZ$, and that any element $X\in\cL$ can
be written in the form $X=\sum X_{i}P_{ij}Y_{j}$, where $P_{ij}\in\cA$
,$(i=1,\,...,p;\,\, j=1,\,...,\, q).$ Then any irreducible finite
dimensional representation of the algebra $\cL$ has dimension at
most $(pq)^{(l+1)/2}(l+1)^{(l+1)/2}.$ \end{lem}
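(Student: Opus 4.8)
The plan is to push everything through an irreducible finite dimensional representation $(\rho,W)$ of $\cL$ and reduce to Lemma \ref{lem: commutative matrices}. Write $n=\dim_{\CC}W$. Since $W$ is a simple $\cL$-module and $\CC$ is algebraically closed, Burnside's theorem (equivalently the Jacobson density theorem) gives $\rho(\cL)=\End_{\CC}(W)\cong M_{n\times n}(\CC)$, so $\dim_{\CC}\rho(\cL)=n^{2}$; the whole point is then to bound this $n^{2}$ from above in terms of $p,q,l$.

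Next I would control the image $\rho(\cA)$. As $\cZ$ lies in the center of $\cL$, every operator $\rho(z)$ with $z\in\cZ$ is an endomorphism of the simple $\cL$-module $W$, hence a scalar by Schur's lemma, so $\rho(\cZ)\subseteq\CC\cdot\mathrm{Id}$. Because $\cA$ is generated as a commutative algebra by $A_{1},\dots,A_{l}$ together with $\cZ$, it follows that $\rho(\cA)$ is exactly the subalgebra of $M_{n\times n}(\CC)$ generated by the commuting matrices $\rho(A_{1}),\dots,\rho(A_{l})$ and the identity. Lemma \ref{lem: commutative matrices} then applies and yields $\dim_{\CC}\rho(\cA)\le(l+1)\,n^{2-\frac{2}{l+1}}$.

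Finally I would use the factorization hypothesis: applying $\rho$ to a general element $X=\sum_{i,j}X_{i}P_{ij}Y_{j}$ (with $P_{ij}\in\cA$) shows that $M_{n\times n}(\CC)=\rho(\cL)$ is contained in the sum, over $1\le i\le p$ and $1\le j\le q$, of the subspaces $\rho(X_{i})\,\rho(\cA)\,\rho(Y_{j})$, each of which is the image of $\rho(\cA)$ under a linear map and hence has dimension at most $\dim_{\CC}\rho(\cA)$. Therefore
\[
n^{2}\;=\;\dim_{\CC}\rho(\cL)\;\le\;pq\cdot\dim_{\CC}\rho(\cA)\;\le\;pq(l+1)\,n^{2-\frac{2}{l+1}},
\]
so $n^{2/(l+1)}\le pq(l+1)$, i.e.\ $n\le(pq)^{(l+1)/2}(l+1)^{(l+1)/2}$, which is the claimed bound. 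The argument is otherwise elementary; the two points one must get right are the appeal to Burnside's theorem, which makes $\rho(\cL)$ all of $M_{n\times n}(\CC)$ rather than a proper subalgebra, and the appeal to Schur's lemma, which collapses $\cZ$ to scalars so that Lemma \ref{lem: commutative matrices} can be invoked with precisely $l$ generators. I do not expect a genuine obstacle here: the real content of the estimate lies entirely in Lemma \ref{lem: commutative matrices}, and this lemma is the bookkeeping that assembles it.
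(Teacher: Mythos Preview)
Your argument is correct and follows the paper's proof essentially line for line: Schur's lemma collapses $\rho(\cZ)$ to scalars, Burnside's theorem gives $\dim\rho(\cL)=n^{2}$, Lemma~\ref{lem: commutative matrices} bounds $\dim\rho(\cA)$, and the factorization hypothesis gives $n^{2}\le pq\,\dim\rho(\cA)$. The only difference is that you spell out more of the routine justifications than the paper does.
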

\begin{proof}
If $\rho:\,\cL\to\End(V)$ is an irreducible representation, $\dim V=n$,
then $\rho(\cZ)=\CC\cdot1$ (by Schur's Lemma), $\dim\rho(\cL)=n^{2}$(by
Burnside's theorem). By the conditions of Lemma \ref{lem: assoc algebra},
\[
\dim\rho(\cL)\le pq\,\dim\rho(\cA)\le pq\,(l+1)n^{2-2/(l+1)}.
\]
Hence $n^{2}\le pq(l+1)n^{2-2/(l+1)}$, that is $n\le(pq)^{(l+1)/2}(l+1)^{(l+1)/2}.$
\end{proof}

\begin{lem}
The Hecke algebra $H(G,K)$ satisfies the conditions of Lemma \ref{lem: assoc algebra}. 
\end{lem}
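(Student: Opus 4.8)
The plan is to first reduce to a conveniently chosen open compact subgroup, and then to exhibit explicitly the data $\cZ\subseteq\cA\subseteq\cL=H(G,K)$ and the finite families $X_1,\dots,X_p,Y_1,\dots,Y_q$ required by Lemma~\ref{lem: assoc algebra}, the needed decomposition $X=\sum X_iP_{ij}Y_j$ being read off from the Cartan decomposition of $G$.

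\textbf{Reduction.} Every open compact subgroup of $\GL_n(F)$ is conjugate into $K_0:=\GL_n(\cO)$; conjugating $K$ (and accordingly the representation) does not change $\dim V^K$, and if $K'\subseteq K$ then $V^K\subseteq V^{K'}$. Hence, to bound $\dim V^K$, we may assume $K=K_m$, the $m$-th principal congruence subgroup of $K_0$. Two features of this $K$ enter the argument: it is normal in $K_0$, so $K_0/K$ is finite and the functions supported on $K_0$ form a copy of the group algebra $\CC[K_0/K]\subseteq H(G,K)$, with basis the normalised cosets $e_w:=\mathrm{vol}(K)^{-1}1_{Kw}$ $(w\in K_0/K)$; and it has an Iwahori factorisation $K=(K\cap N^-)(K\cap A)(K\cap N)$, where $A$ is the diagonal torus and $N,N^-$ the upper and lower unipotent radicals.

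\textbf{The subalgebras.} For $\lambda=(\lambda_1,\dots,\lambda_n)\in\ZZ^n$ with $\lambda_1\le\dots\le\lambda_n$ write $\varpi^\lambda:=\mathrm{diag}(\varpi^{\lambda_1},\dots,\varpi^{\lambda_n})$. Let $\cA$ be the linear span of the functions $1_{K\varpi^\lambda K}$ over all such $\lambda$, and let $\cZ:=\CC[\,1_{K\varpi I},1_{K(\varpi I)^{-1}}\,]$, where $\varpi I\in Z(G)=F^\times$. Central translates commute with everything, so $\cZ$ lies in the centre of $\cL$, and $\cZ\subseteq\cA$ since $\varpi^kI=\varpi^{(k,\dots,k)}$. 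The decisive point is that $\cA$ is a \emph{commutative subalgebra}. Using the Iwahori factorisation together with the containments $\varpi^{-\mu}(K\cap N)\varpi^{\mu}\subseteq K\cap N$ and $\varpi^{\lambda}(K\cap N^-)\varpi^{-\lambda}\subseteq K\cap N^-$ (valid precisely because $\lambda,\mu$ are increasing), one obtains the clean set identity $K\varpi^\lambda K\cdot K\varpi^\mu K=K\varpi^{\lambda+\mu}K$; therefore $1_{K\varpi^\lambda K}*1_{K\varpi^\mu K}$ is a nonzero scalar multiple of $1_{K\varpi^{\lambda+\mu}K}$, and as the exponents add commutatively $\cA$ is closed under convolution and commutative. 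The monoid of increasing $\lambda$'s is generated by the $n-1$ vectors $\mu_i=(0,\dots,0,1,\dots,1)$ ($i$ zeros) together with $\pm(1,\dots,1)$; hence, setting $A_i:=1_{K\varpi^{\mu_i}K}$, the algebra $\cA$ is generated by $\cZ$ and $A_1,\dots,A_l$ with $l=n-1$. I expect this commutativity assertion to be the principal obstacle, as it is exactly where the fine structure of $K$ (Iwahori factorisation, contracting torus elements) must be invoked.

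\textbf{The decomposition.} By the Cartan decomposition $G=\bigsqcup_{\lambda_1\le\dots\le\lambda_n}K_0\varpi^\lambda K_0$ and the finiteness of $K_0/K$, every $K$-double coset has the form $Kw_1\varpi^\lambda w_2K$ with $w_1,w_2$ running over representatives of $K_0/K$; in particular the functions $1_{Kw_1\varpi^\lambda w_2K}$ span $H(G,K)$. Since each $w_i\in K_0$ normalises $K$, left convolution by $1_{Kw_1}$ equals $\mathrm{vol}(K)$ times left translation by $w_1$ on left-$K$-invariant functions, and symmetrically on the right, so $e_{w_1}*1_{K\varpi^\lambda K}*e_{w_2}=1_{Kw_1\varpi^\lambda w_2K}$. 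Taking $\{X_i\}=\{Y_j\}=\{e_w:w\in K_0/K\}$, so that $p=q=[K_0:K]$, an arbitrary element $X=\sum_{w_1,\lambda,w_2}c_{w_1,\lambda,w_2}\,1_{Kw_1\varpi^\lambda w_2K}$ equals $\sum_{i,j}X_i\bigl(\sum_\lambda c_{w_i,\lambda,w_j}\,1_{K\varpi^\lambda K}\bigr)Y_j$, which is of the required shape $\sum X_iP_{ij}Y_j$ with $P_{ij}\in\cA$. This checks all hypotheses of Lemma~\ref{lem: assoc algebra}, and substituting $l=n-1$ and $p=q=[\GL_n(\cO):K_m]$ there yields the asserted bound on $\dim V^K$.
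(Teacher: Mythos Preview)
Your argument is correct and follows exactly the route the paper sketches (and that Bernstein's original carries out): the same choice of $\cZ,\cA,A_i$ and of $X_i=Y_i$ indexed by $K_0/K$, with the decomposition coming from Cartan and the commutativity of $\cA$ from the Iwahori factorisation of $K_m$. You in fact supply more detail than the paper, which simply cites \cite[Lemma]{Bernstein} for the proof and only displays the data in the $\GL_n$ case; the one point you might make more explicit is why the structure constant in $1_{K\varpi^\lambda K}*1_{K\varpi^\mu K}=c(\lambda,\mu)\,1_{K\varpi^{\lambda+\mu}K}$ is symmetric in $\lambda,\mu$ (integrate both sides over $G$, or invoke the transpose anti-involution), since ``the exponents add commutatively'' alone only pins down the support.
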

For the proof see \cite[Lemma]{Bernstein}. We only note that $l$
is the $F$-semi-simple rank of the group $G$ and the numbers $p,\, q$
depend linearly on $[K_{0}:K]$ where $K_{0}$ is a maximal compact
subgroup of $G$ such that $K\subset K_{0}.$ Let us demonstrate the
choices of $p,\, q,\, x_{i},$ and $y_{j}$ in the case $G=\GL_{n}(F)$.
Let $\cO=\left\{ x\in F\,\mid|x|\le1\right\} $ and let $\varpi$
be a generator of the maximal ideal in $\cO$. Then $K_{0}=GL_{n}(\cO)$
is a maximal compact subgroup of $G$. Let $K$ be a ``good enough''
compact open subgroup of $\GL_{n}(O_{F})$, for example a congruence
subgroup of $\GL_{n}(O_{F})$,
\[
K=K_{m}:=\left\{ x\in G\,\mid||1-x||\le||\varpi||^{m}\right\} ,
\]
where $m\ge1$ and $||x||=\max|x_{ij}|$. In this case one can take
$a_{j}$ $(j=1,\,...,\, n-1)$ to be a diagonal matrix, $(a_{j})_{ii}=1$
for $i\le j$ and $(a_{j})_{ii}=\varpi$ for $i>j$ and define $A_{j}=1_{Ka_{j}K}$.
Let $\cZ$ be the algebra generated by $1_{Kg}$ for $g\in Z(G)$.
Let $\cA$ be the algebra generated by $\cZ$ and $A_{j}$, $j=1,\,...,n-1$.
Let $p=q=[K_{0}:K]$, decompose $K_{0}=\cup Kg_{i}$ $i=1,\,...,p$
and choose $x_{i}=y_{i}=1_{Kg_{i}K}$ . The elements $x_{i},y_{j},$
$i,j=1,\,...,\, p$ and the algebras $\cA,\,\cZ,$ and $H(G,K)$ satisfy
the conditions of Lemma \ref{lem: assoc algebra}. As a consequence,
we obtain 
\[
\dim V^{K}\le[\GL_{n}(\cO_{F}):K]^{n}\cdot n^{n/2}
\]
for every irreducible representation $(\pi,V)$ of $GL_{n}(F)$. This
improves Bernstein's bound 
\[
\dim V^{K}\le[\GL_{n}(\cO_{F}):K]^{2^{n-1}}.
\]
See \cite{Bernstein} for more details. In the proof of Lemma \ref{lem: commutative matrices}
we need the following
\begin{lem}
\label{lem: jordan} Let $A\in M_{n\times n}(\CC)$ be a nilpotent
matrix and let $m\ge1$. Then 
\[
\dim\left(Span\left\{ A^{m}B\,\mid AB=BA\right\} \right)\le\frac{n^{2}}{m}.
\]
\end{lem}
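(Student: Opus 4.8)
The quantity $\dim\bigl(\mathrm{Span}\{A^{m}B\mid AB=BA\}\bigr)$ is a conjugation invariant of $A$: replacing $A$ by $gAg^{-1}$ replaces $\{B\mid AB=BA\}$ by $g\{B\mid AB=BA\}g^{-1}$ and $A^{m}B$ by $g(A^{m}B)g^{-1}$. So the plan is to put $A$ into Jordan normal form and compute directly. Since $A$ is nilpotent, write $A=\bigoplus_{i=1}^{k}J_{i}$ as a direct sum of nilpotent Jordan blocks, with $J_{i}$ of size $\lambda_{i}$ and $\sum_{i}\lambda_{i}=n$; decompose $\CC^{n}=\bigoplus_{i}\CC^{\lambda_{i}}$ accordingly and write every matrix in the corresponding block form $B=(B_{ij})$, where $B_{ij}$ has size $\lambda_{i}\times\lambda_{j}$.

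First I would recall the classical blockwise description of the centralizer: $B$ commutes with $A$ if and only if $J_{i}B_{ij}=B_{ij}J_{j}$ for all $i,j$. Writing $\mathcal{H}_{ij}$ for the space of $\lambda_{i}\times\lambda_{j}$ matrices $X$ with $J_{i}X=XJ_{j}$, the centralizer is thus $\bigoplus_{i,j}\mathcal{H}_{ij}$. Since $A^{m}=\bigoplus_{i}J_{i}^{m}$ is block diagonal, the $(i,j)$ block of $A^{m}B$ is $J_{i}^{m}B_{ij}$, and as $B$ runs over the centralizer the blocks $B_{ij}$ vary independently over $\mathcal{H}_{ij}$; hence
\[
\mathrm{Span}\{A^{m}B\mid AB=BA\}=\bigoplus_{i,j}J_{i}^{m}\mathcal{H}_{ij},\qquad\dim=\sum_{i,j}\dim\bigl(J_{i}^{m}\mathcal{H}_{ij}\bigr).
\]
The key point is the dimension of each summand. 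I would identify $\CC^{\lambda_{i}}$ with $\CC[x]/(x^{\lambda_{i}})$ so that $J_{i}$ becomes multiplication by $x$; then $\mathcal{H}_{ij}=\mathrm{Hom}_{\CC[x]}\bigl(\CC[x]/(x^{\lambda_{j}}),\,\CC[x]/(x^{\lambda_{i}})\bigr)$, which a short computation identifies with $\CC[x]/(x^{s_{ij}})$, where $s_{ij}:=\min(\lambda_{i},\lambda_{j})$, in such a way that left multiplication by $J_{i}^{m}$ corresponds to multiplication by $x^{m}$. Therefore $\dim\bigl(J_{i}^{m}\mathcal{H}_{ij}\bigr)=\dim\bigl(x^{m}\CC[x]/(x^{s_{ij}})\bigr)=\max(0,\,s_{ij}-m)$.

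It then remains to estimate $\sum_{i,j}\max(0,s_{ij}-m)$. For every real $s\ge0$ and integer $m\ge1$ one has $\max(0,s-m)\le s^{2}/m$: this is clear when $s\le m$, while for $s>m$ it is equivalent to $0\le s^{2}-ms+m^{2}$, which always holds. Since $s_{ij}^{2}=\min(\lambda_{i},\lambda_{j})^{2}\le\lambda_{i}\lambda_{j}$, summing yields
\[
\dim\bigl(\mathrm{Span}\{A^{m}B\mid AB=BA\}\bigr)\le\frac{1}{m}\sum_{i,j}\lambda_{i}\lambda_{j}=\frac{1}{m}\Bigl(\sum_{i}\lambda_{i}\Bigr)^{2}=\frac{n^{2}}{m}.
\]
The hard part will be the middle step: identifying the intertwiner spaces $\mathcal{H}_{ij}$ and the action of $J_{i}^{m}$ on them, i.e.\ the Frobenius description of the centralizer of a nilpotent matrix and of its filtration by powers of $A$; the rest is bookkeeping together with the elementary inequality above. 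I note in passing that using instead $\max(0,s-m)\le s^{2}/(4m)$, which follows from $(s-2m)^{2}\ge0$, the very same argument gives the sharper bound $n^{2}/(4m)$; the stated bound $n^{2}/m$ suffices for the proof of Lemma~\ref{lem: commutative matrices}.
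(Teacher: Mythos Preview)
Your proof is correct and follows essentially the same route as the paper: put $A$ in Jordan form, decompose the centralizer blockwise as $\bigoplus_{i,j}\mathcal{H}_{ij}$ with $\dim\mathcal{H}_{ij}=\min(\lambda_i,\lambda_j)$, and bound the contribution of each block of $A^{m}\cdot(\text{centralizer})$ by $\lambda_i\lambda_j/m$, summing to $n^{2}/m$. The only difference is bookkeeping in that last step: the paper keeps only the blocks with $\max(\lambda_i,\lambda_j)\ge m$ and uses $\min(\lambda_i,\lambda_j)\le\lambda_i\lambda_j/\max(\lambda_i,\lambda_j)\le\lambda_i\lambda_j/m$ there, whereas you compute the exact block dimension $\max(0,\min(\lambda_i,\lambda_j)-m)$ and then apply the elementary inequality $\max(0,s-m)\le s^{2}/m$; your remark about the sharper constant $n^{2}/(4m)$ is a pleasant extra.
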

\begin{proof}
Suppose $A=diag(J_{l_{1}},J_{l_{2}},...,J_{l_{k}})$ where $J_{i}$
is a Jordan block of dimension $i\times i$. The assertion $AC=CA$
for $C_{n\times n}=(C_{ij})$ with $C_{ij}$ a block of dimension
$l_{i}\times l_{k}$ means 
\[
J_{l_{i}}C_{ij}=C_{ij}J_{l_{j}}.
\]
The dimension spanned by such blocks is $\min(l_{i},l_{j})$. Let
us call this dimension $d_{ij}$ and note that
\[
d_{ij}\le\frac{l_{i}l_{j}}{\max(l_{i},l_{j})}.
\]
 The matrix $A^{m}$ kills every Jordan cell of size $\le m$. Thus,
the dimension of the vector space spanned by the matrices of the form
$A^{m}B$ less than 
\[
\sum_{\max(l_{i},l_{j})\ge m}d_{ij}\le\sum\frac{l_{i}l_{j}}{m}=\frac{n^{2}}{m}.
\]

\end{proof}

\begin{proof}[Proof of Lemma \ref{lem: commutative matrices} ]
 By a standard agrument, we can assume that the matrices $A_{1},\,...,\, A_{l}$
are nilpotent. Namely, let us view the matrices $A_{1},\,...,\, A_{l}$
as operators on $V=\CC^{n}$. Since the algebra $\cF$ is commutative,
we can decompose the space $V$ into the direct sum of $\cF$-invariant
subspaces $V_{j}$ such that for every $A\in\cF$ and every $j$,
the eigenvalues of $A|_{V_{j}}$ coincide. We can restrict ourselves
to the case $V=V_{j}$, and substracting suitable constants from the
operators $A_{i}$, we may assume that all the $A_{i}$ are nilpotent.\\
 Let $x<n^{2}$, we will choose it later. Divide matrices of the form
$A_{1}^{j_{1}}A_{2}^{j_{2}}...A_{l}^{j_{l}}$ into two families. One
with $j_{i}<x$ for every $1\le i\le l$ and the other with at least
one of the powers $j_{i}\ge x$. The first family consists of $x^{l}$
matrices. Let us estimate the dimension of the subspace generated
by the second family. Suppose $j_{1}\ge x.$ The number of linearly
independent matrices of the form $A_{1}^{x}B$ where $A_{1}B=BA_{1}$
is at most $\frac{n^{2}}{x}$ by Lemma \ref{lem: jordan}. Thus $\dim\cF$
is bounded by 
\[
f(x):=\frac{ln^{2}}{x}+x^{l}.
\]
The minimum is achieved for $f'(x_{0})=0,\, x_{0}=n^{2/(l+1)}.$ We
obtain 
\[
\dim\cF\le ln^{2-2/(l+1)}+n^{2l/(l+1)}=(l+1)n^{2-\frac{2}{l+1}}.
\]

\end{proof}
I would like to thank Joseph Bernstein, Omer Ben-Neria and Dror Speiser
on useful discussions.\\
The research was supported by ISF Grant No. 1794/14.

\end{document}